\documentclass[12pt]{article}


\usepackage{etex}
\usepackage{amsmath,amssymb,amsfonts,amsthm}
\usepackage{ifpdf}
\usepackage{enumitem}
\usepackage{leftidx}

\usepackage{etoolbox}
\usepackage{fullpage}
\usepackage{longtable}
\usepackage{pdflscape}
\usepackage{multicol}
\usepackage[all]{xy}
\input xy
\xyoption{all}

\usepackage[backend=bibtex,style=alphabetic,doi=false,isbn=false,url=false,minnames=6,maxnames=6]{biblatex}
\setcounter{biburlnumpenalty}{9000}
\setcounter{biburllcpenalty}{1000}
\setcounter{biburlucpenalty}{8000}
\renewbibmacro{in:}{%
  \ifentrytype{article}{}{\printtext{\bibstring{in}\intitlepunct}}}
\renewbibmacro*{volume+number+eid}{%
  \printtext{vol.}
  \printfield{volume}%
  \setunit*{\addnbspace}
  \printfield{number}%
  \setunit{\addcomma\space}%
  \printfield{eid}}
\DeclareFieldFormat[article]{number}{\mkbibparens{#1}}

\addbibresource{../../bibliography/bibliography.bib}

\usepackage{xifthen}  
\usepackage{breqn}
\usepackage{yfonts}
\usepackage{afterpage}

\usepackage{xcolor}
\definecolor{dark-red}{rgb}{0.7,0.25,0.25}
\definecolor{dark-blue}{rgb}{0.15,0.15,0.55}
\definecolor{medium-blue}{rgb}{0,0,0.65}
\definecolor{DarkGreen}{RGB}{0,150,0}

\ifpdf
\usepackage[pdftex,plainpages=false,hypertexnames=false,pdfpagelabels,breaklinks]{hyperref}
\else
\usepackage[dvips,plainpages=false,hypertexnames=false,breaklinks]{hyperref}
\fi
\hypersetup{
   colorlinks, linkcolor={purple},
   citecolor={medium-blue}, urlcolor={medium-blue}
}

\setlength\topmargin{0in}
\setlength\headheight{0in}
\setlength\headsep{0in}
\setlength\textheight{9in}
\addtolength{\hoffset}{-0.25in}
\addtolength{\textwidth}{.5in}
\setlength\parindent{0.25in}

\usepackage{tikz}
\usetikzlibrary{calc}
\usetikzlibrary{shapes}
\usetikzlibrary{backgrounds}
\usetikzlibrary{decorations.pathreplacing}
\usepackage{tikz-qtree}

\tikzstyle{shaded}=[fill=red!10!blue!20!gray!30!white]
\tikzstyle{unshaded}=[fill=white]
\tikzstyle{empty box}=[circle, draw, thick, fill=white, opaque, inner sep=2mm]
\tikzstyle{annular}=[scale=.7, inner sep=1mm, baseline]
\tikzstyle{rectangular}=[scale=.75, inner sep=1mm, baseline=-.1cm]

\vfuzz2pt 
\hfuzz2pt 


\newcommand{\googlebooks}[1]{(preview at \href{https://books.google.com/books?id=#1}{google books})}

\newcommand{\numdam}[1]{}

\theoremstyle{plain}
\newtheorem{prop}{Proposition}[subsection]

\newtheorem{thm}[prop]{Theorem}
\newtheorem{thmalpha}{Theorem}

\newtheorem{coralpha}[thmalpha]{Corollary}

\newtheorem*{cor*}{Corollary}
\newtheorem*{thm*}{Theorem}

\numberwithin{equation}{section}

\theoremstyle{remark}
\newtheorem{example}[prop]{Example}

\newtheorem{remark}[prop]{Remark}           
\newtheorem*{rem*}{Remark}               
\newtheorem*{example*}{Example}                

\theoremstyle{definition}
\newtheorem{defn}[prop]{Definition}         
   
\newtheorem{nota}[prop]{Notation}   
\newtheorem*{defn*}{Definition}             

\theoremstyle{plain}


\newcounter{comment}
\newcommand{\noop}[1]{}

\def\clap#1{\hbox to 0pt{\hss#1\hss}}


\def\semicolon{;}
\def\applytolist#1{
    \expandafter\def\csname multi#1\endcsname##1{
        \def\multiack{##1}\ifx\multiack\semicolon
            \def\next{\relax}
        \else
            \csname #1\endcsname{##1}
            \def\next{\csname multi#1\endcsname}
        \fi
        \next}
    \csname multi#1\endcsname}

\def\calc#1{\expandafter\def\csname c#1\endcsname{{\mathcal #1}}}
\applytolist{calc}QWERTYUIOPLKJHGFDSAZXCVBNM;
\def\bbc#1{\expandafter\def\csname bb#1\endcsname{{\mathbb #1}}}
\applytolist{bbc}QWERTYUIOPLKJHGFDSAZXCVBNM;
\def\bfc#1{\expandafter\def\csname bf#1\endcsname{{\mathbf #1}}}
\applytolist{bfc}QWERTYUIOPLKJHGFDSAZXCVBNM;

\DeclareMathOperator{\Tr}{Tr}

\DeclareMathOperator{\sh}{sh}

\newcommand{\id}{\boldsymbol{1}}
\renewcommand{\imath}{\mathfrak{i}}
\renewcommand{\jmath}{\mathfrak{j}}

\makeatletter
\newcommand{\hashdef}[2]{\@namedef{#1}{#2}}
\newcommand{\hashlookup}[1]{\@nameuse{#1}}
\makeatother

\IfFileExists{../../graphs/lookup.tex}%
	{\newcommand{\pathtographs}{../../graphs/}}%
	{\newcommand{\pathtographs}{diagrams/graphs/}}

\input{\pathtographs lookup.tex}

\newcommand{\bigraph}[1]{{\hspace{-3pt}\begin{array}{c}%
  \raisebox{-2.5pt}{\includegraphics[height=6mm]{\pathtographs \hashlookup{#1}}}%
\end{array}\hspace{-3pt}}}

\makeatletter

\def\@testdef #1#2#3{%
  \def\reserved@a{#3}\expandafter \ifx \csname #1@#2\endcsname
 \reserved@a  \else
\typeout{^^Jlabel #2 changed:^^J%
\meaning\reserved@a^^J%
\expandafter\meaning\csname #1@#2\endcsname^^J}%
\@tempswatrue \fi}

\usepackage{libertine}
\usepackage[T1]{fontenc}

\usepackage{titling}
\setlength{\droptitle}{-1cm}

\newcommand{\FS}{\mathfrak{F}_S } 

\newcommand{\unF}{
\begin{tikzpicture}[baseline = -.17cm, xscale=.7, yscale=.6]
  \draw[thick] (-.2,-.325) -- (-.2,.15) -- (.15,.15) -- (.15,.05) -- (-.1,.05) -- (-.1,-.05) -- (.15,-.05) -- (.15,-.15) -- (-.1,-.15) -- (-.1,-.325) -- (-.22,-.325);
\end{tikzpicture}
}

\newcommand{\shF}{
\begin{tikzpicture}[baseline = -.17cm, xscale=.7, yscale=.6]
  \draw[thick, fill=gray] (-.2,-.325) -- (-.2,.15) -- (.15,.15) -- (.15,.05) -- (-.1,.05) -- (-.1,-.05) -- (.15,-.05) -- (.15,-.15) -- (-.1,-.15) -- (-.1,-.325) -- (-.22,-.325);
\end{tikzpicture}
}

\newcommand{\unP}{
\begin{tikzpicture}[baseline = -.17cm, xscale=.7, yscale=.6]
  \draw[thick] (-.2,-.325) -- (-.2,.15) -- (-.05,.15) arc (90:-90:.15cm) -- (-.1,-.15) -- (-.1,-.325) -- (-.22,-.325);
  \draw[thick, unshaded] (-.1,.05) -- (-.05,.05) arc (90:-90:.05cm) -- (-.1,-.05) -- (-.1,.075);
\end{tikzpicture}
}

\newcommand{\shP}{
\begin{tikzpicture}[baseline = -.17cm, xscale=.7, yscale=.6]
  \draw[thick, fill=gray] (-.2,-.325) -- (-.2,.15) -- (-.05,.15) arc (90:-90:.15cm) -- (-.1,-.15) -- (-.1,-.325) -- (-.22,-.325);
  \draw[thick, unshaded] (-.1,.05) -- (-.05,.05) arc (90:-90:.05cm) -- (-.1,-.05) -- (-.1,.075);
\end{tikzpicture}
}

\newcommand{\unS}{
\begin{tikzpicture}[baseline = -.18cm, xscale=.7, yscale=.6]
  \draw[thick] (0,0) arc (0:270:.15cm) arc (90:-180:.05cm) -- (-.3,-.2) arc (-180:90:.15cm) arc (270:0:.05cm) -- (.02,0);
\end{tikzpicture}
}

\newcommand{\shS}{
\begin{tikzpicture}[baseline = -.18cm, xscale=.7, yscale=.6]
  \filldraw[thick, fill=gray] (0,0) arc (0:270:.15cm) arc (90:-180:.05cm) -- (-.3,-.2) arc (-180:90:.15cm) arc (270:0:.05cm) -- (.02,0);
\end{tikzpicture}
}

\newcommand{\unU}{
\begin{tikzpicture}[baseline = -.17cm, xscale=.7, yscale=.6]
  \draw[thick] (-.2,.15) -- (-.2,-.125) arc (-180:0:.2cm) -- (.2,.15) -- (.1,.15) -- (.1,-.125) arc (0:-180:.1cm) -- (-.1,.15) -- (-.22,.15);
\end{tikzpicture}
}

\newcommand{\shU}{
\begin{tikzpicture}[baseline = -.17cm, xscale=.7, yscale=.6]
  \draw[thick, fill=gray] (-.2,.15) -- (-.2,-.125) arc (-180:0:.2cm) -- (.2,.15) -- (.1,.15) -- (.1,-.125) arc (0:-180:.1cm) -- (-.1,.15) -- (-.22,.15);\end{tikzpicture}
}

\newcommand{\unV}{
\begin{tikzpicture}[baseline = -.17cm, xscale=.7, yscale=.6]
  \draw[thick] (-.2,.15) -- (0,-.325) -- (.2,.15) -- (.1,.15) -- (0,-.125) -- (-.1,.15) -- (-.2,.15);
  \draw[thick] (-.15,.15) -- (-.2,.15) -- (-.192,.131);
\end{tikzpicture}
}

\newcommand{\shV}{
\begin{tikzpicture}[baseline = -.17cm, xscale=.7, yscale=.6]
  \draw[thick, fill=gray] (-.2,.15) -- (0,-.325) -- (.2,.15) -- (.1,.15) -- (0,-.125) -- (-.1,.15) -- (-.2,.15);
  \draw[thick] (-.15,.15) -- (-.2,.15) -- (-.192,.131);
\end{tikzpicture}
}

\newcommand{\unZ}{
\begin{tikzpicture}[baseline = -.17cm, xscale=.7, yscale=.6]
  \draw[thick] (.2,-.325) -- (-.2,-.325) -- (-.2,-.225) -- (.05,.05) -- (-.2,.05) -- (-.2,.15) -- (.2,.15) -- (.2,.05) -- (-.05,-.225) -- (.2,-.225) -- (.2,-.348);
\end{tikzpicture}
}

\newcommand{\shZ}{
\begin{tikzpicture}[baseline = -.17cm, xscale=.7, yscale=.6]
  \draw[thick, fill=gray] (.2,-.325) -- (-.2,-.325) -- (-.2,-.225) -- (.05,.05) -- (-.2,.05) -- (-.2,.15) -- (.2,.15) -- (.2,.05) -- (-.05,-.225) -- (.2,-.225) -- (.2,-.348);
\end{tikzpicture}
}

\newcommand{\op}{\text{op}}


\title{Lifting shadings on symmetrically self-dual subfactor planar algebras}
\author{Zhengwei Liu, Scott Morrison, and David Penneys}
\date{}
\begin{document}
\maketitle

\begin{abstract}
In this note, 
we discuss the notion of symmetric self-duality of shaded planar algebras, which allows us to lift shadings on subfactor planar algebras to obtain $\bbZ/2\bbZ$-graded unitary fusion categories.
This finishes the proof that there are unitary fusion categories with fusion graphs 4442 and 3333.
\end{abstract}


Planar algebras have proven to be useful in the construction \cite{MR2679382,MR2979509} and classification \cite{MR3166042,1509.00038} of subfactors and fusion categories.
In recent articles, we used planar algebras to construct subfactor planar algebras with principal graphs 4442, 3333, and 2221 \cite{MR3314808}, 
$$
\bigraph{bwd1v1v1v1v1p1p1v1x0x0p0x1x0p0x0x1v1x0x0p0x1x0v1x0p0x1duals1v1v1v2x1x3v2x1}\,,\,
\bigraph{bwd1v1v1v1p1p1v1x0x0p0x1x0p0x0x1v1x0x0p0x1x0p0x0x1duals1v1v1x2x3v1x2x3}\,,\text{ and }
\bigraph{bwd1v1v1p1p1v1x0x0p0x1x0duals1v1v2x1},
$$
and a new subfactor with principal graphs 22221 with interesting dual data \cite{MR3306607} (see Example \ref{ex:22221} below). 
This 22221 subfactor turns out to be an example of a new parameterized family of unshaded subfactor planar algebras related to quantum subgroups \cite{1507.06030}.
The 2221 subfactor was originally constructed by Izumi \cite{MR1832764}, as was the 3333 subfactor \cite{1609.07604}.

In \cite[Appendix]{MR2786219}, Ostrik constructed a $\bbZ/2$-graded unitary fusion category with fusion graph 2221.
Thus upon constructing 4442 and 3333, we naturally wondered whether we could lift the shading on our subfactor planar algebras to get $\bbZ/2$-graded fusion categories with these fusion graphs.

We showed that these subfactor planar algebras are \emph{symmetrically self-dual}, i.e., there is a planar algebra isomorphism $\Phi$ from $\cP_\bullet=(\cP_+,\cP_-)$ to its dual $\overline{\cP}_\bullet=(\cP_-,\cP_+)$ such that $\Phi_\mp\circ \Phi_\pm = \id_\pm$.
We furthermore claimed that we can lift the shading on a symmetrically self-dual subfactor planar algebra to obtain an unshaded factor planar algebra \cite{MR3405915}.
This would mean the associated tensor category of projections is a $\bbZ/2$-graded unitary fusion category $\cC$ whose even graded part is the even part of $\cP_\bullet$ and whose odd graded part is the odd part of $\cP_\bullet$.

In this note, we complete the proof of this claim to complete the construction of these categories.

\begin{thmalpha}\label{thm:LiftShading}
Given a symmetrically self-dual shaded planar algebra $(\shP_\bullet,\Phi)$, there is an unshaded planar algebra $\unP_\bullet$ such that $\shP_\bullet$ is obtained from $\unP_\bullet$ by re-shading (as in Definition \ref{defn:ShadeP} below). 
\end{thmalpha}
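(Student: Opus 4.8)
The plan is to realise $\unP_\bullet$ by taking, as underlying spaces, the shaded spaces of $\shP_\bullet$ with the two shadings identified along $\Phi$, and by computing the action of an arbitrary (possibly non-shadeable) unshaded tangle through a checkerboard shading that is allowed to reverse across a system of ``seams,'' with the isomorphism $\Phi$ inserted at each seam. For shadeable boundary data I set $\unP_n$ to be the common value $\shP_{n,+}\iso\shP_{n,-}$, using $\Phi_-\colon\shP_{n,-}\to\shP_{n,+}$ to identify the two shadings; since $\Phi_-\circ\Phi_+=\id_+$ and $\Phi_+\circ\Phi_-=\id_-$, this identification is canonical and involutive. For the remaining $n$ the single unavoidable seam meeting the boundary is absorbed by $\Phi$, which simultaneously defines the space $\unP_n$, the involutivity guaranteeing independence of where the seam meets the boundary. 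Once the tangle action below is shown to be well defined, the re-shading of $\unP_\bullet$ (Definition~\ref{defn:ShadeP}) returns $\shP_\bullet$ almost by construction, since a genuinely shaded tangle requires no seams and then acts by its shaded action.

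The heart of the construction is the action of a general unshaded planar tangle $T$. The obstruction to $2$-colouring the regions of $T$ compatibly with the strings is $\bbZ/2$-valued, so I would first choose a shading of $T$ that is valid everywhere except along a transverse $1$-manifold $D\subset T$ (the seams), meeting the strings and internal disks generically. I then define the output of $T$ by applying the shaded action of $\shP_\bullet$ on each complementary shaded piece and inserting $\Phi_\pm$ wherever a string crosses a seam, the sign being dictated by the two local shadings. Because $\shP_\bullet$ is a shaded planar algebra and $\Phi$ is a planar algebra isomorphism, this assignment is a well-defined multilinear map on the chosen boxes, for each fixed choice of $D$.

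The main obstacle, and the step where the symmetric self-duality hypotheses are used in full, is proving that this output does not depend on the auxiliary choices. Any two admissible seam systems $D,D'$ for the same $T$ differ by a finite sequence of elementary moves: isotopy of seams, creation or annihilation of a small seam circle or of a canceling pair of parallel seams, and sliding a seam across a string or past an internal or external disk. Invariance under isotopy is immediate; a canceling pair contributes $\Phi_\mp\circ\Phi_\pm=\id_\pm$, so creation and annihilation change nothing; and sliding a seam across a string or a disk is absorbed precisely because $\Phi$ \emph{commutes with the shaded tangle action}, i.e.\ is a planar algebra automorphism. Thus the involutivity $\Phi^2=\id$ and the naturality of $\Phi$ are exactly what trivialise the $\bbZ/2$ shading obstruction, and I would package this into a short ``seam calculus'' lemma.

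Finally I would verify the planar algebra axioms for $\unP_\bullet$. Compatibility with operadic composition follows from the invariance above: given a composite tangle, choose seam systems on the constituent tangles whose union is an admissible seam system on the composite, and compare with the shaded composition law in $\shP_\bullet$. Unitality and the action of the identity and rotation tangles are then immediate, with the one-click rotation realised as the shaded rotation postcomposed with $\Phi$, and its square matching the honest shaded two-click rotation because $\Phi^2=\id$. Having established the seam-calculus invariance, everything else is routine bookkeeping, so I expect essentially all of the real work to lie in that invariance argument.
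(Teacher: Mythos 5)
Your seam-calculus strategy is viable and is, at bottom, the paper's argument run in a different gauge: the paper never introduces free-floating seams, but instead fixes the checkerboard shading once and for all (Definition \ref{defn:ShadeT}: the region at the output disk's distinguished interval is unshaded) and records the only remaining mismatch as an operator $\psi_i(\shU)\in\{\id,\Phi_+\}$ applied to each input disk's label; compatibility with composition is then verified directly in two cases. Your ``slide a seam across a string or a disk'' move is exactly the paper's Case 2 identity $\Phi_{\pm_j(\shV)}\circ\psi_j(\shV)=\psi_j(\shV^\op)$ combined with naturality $\Phi\circ\shZ(\shV)=\shZ(\shV^\op)\circ\left(\bigotimes\Phi\right)$, and your cancelling pair of parallel seams is $\Phi_\mp\circ\Phi_\pm=\id$. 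What the gauge-fixed version buys is that no topology is needed: the paper never has to prove that any two admissible seam systems are connected by your elementary moves (a genuine general-position lemma that your proposal asserts rather than proves), nor to define the value of a tangle decorated by an arbitrary seam system. What your version would buy, if completed, is a formulation manifestly independent of the normalization choice.

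Two steps you lean on are, however, not in order as written. First, the odd case: a seam ending on a box boundary cannot ``define the space $\unP_n$'' for odd $n$, because a shaded planar algebra attaches no vector space at all to a box with an odd number of boundary points, and $\Phi_\pm$ only maps $\shP_{n,\pm}\to\shP_{n,\mp}$; there is nothing for the seam to be absorbed into. You must simply set $\unP_{2n+1}=(0)$ --- this is in any case forced by the statement of the theorem, since Definition \ref{defn:ShadeP} only re-shades unshaded planar algebras whose odd spaces vanish --- and nothing is lost: by parity, a tangle all of whose input disks carry evenly many marked points automatically has an even output disk, so every tangle that does not act as zero for trivial reasons is honestly checkerboard-shadeable. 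Second, your evaluation rule ``insert $\Phi_\pm$ wherever a string crosses a seam'' does not typecheck: $\Phi_\pm$ is an isomorphism between box spaces, not a $2$-box that can be placed on a string, so for a general seam system your recipe does not yet define anything. Making it precise essentially forces you to isotope every seam into a small circle around an input disk, where ``crossing the seam'' becomes ``apply $\Phi_+$ to that disk's label'' --- that is, it reproduces the paper's $\psi_i$ --- at which point your invariance lemma and the paper's two-case computation are the same argument. So the mechanism you identify (involutivity kills the $\bbZ/2$ shading obstruction, naturality handles strings and disks) is the right one, but the two ingredients above are respectively wrong and undefined as stated, and repairing them lands you on the paper's proof.
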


\begin{coralpha}
There are $\bbZ/2$-graded unitary fusion categories with fusion graphs 4442 and 3333.
\end{coralpha}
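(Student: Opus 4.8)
The Corollary is immediate from Theorem~\ref{thm:LiftShading} together with the already-established symmetric self-duality of the 4442 and 3333 subfactor planar algebras: applying the theorem produces unshaded planar algebras $\unP_\bullet$, the lift preserves the $*$-structure so that each $\unP_\bullet$ is a unitary factor planar algebra, and its tensor category of projections is then a unitary fusion category. The $\bbZ/2$-grading is inherited from the shading, with even and odd parts equal to the even and odd parts of $\shP_\bullet$, yielding fusion graphs 4442 and 3333. All the content is thus in Theorem~\ref{thm:LiftShading}, which I now sketch.

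The plan is to build $\unP_\bullet$ directly from $\shP_\bullet$, using $\Phi$ as the glue that makes the shading irrelevant. For the underlying spaces I would set $\unP_n := \cP_{n,+}$, observing that $\Phi$ canonically identifies this with $\cP_{n,-}$; the real content is a consistent action of \emph{unshaded} planar tangles on these spaces. Given such a tangle $T$, I would first choose a checkerboard shading $\sigma$, turning $T$ into a shaded tangle $T^\sigma$ whose every input and output disk acquires a shading of its marked region. At each disk where this shading is the shaded one, I convert between $\cP_{n,-}$ and the fixed convention $\cP_{n,+}$ by inserting the isomorphism $\Phi_\pm$ (precomposing at inputs, postcomposing at the output); wrapping these conversions around the shaded action $Z_{T^\sigma}$ produces a candidate map $Z_T^\sigma$.

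The crux — and the single place where symmetric self-duality is needed — is showing $Z_T^\sigma$ does not depend on $\sigma$. An unshaded tangle has exactly two checkerboard shadings, interchanged by globally swapping shaded and unshaded regions, and this swap simultaneously flips the marked-region shading at every disk. Passing from one shading to the other therefore removes the factor $\Phi_\pm$ from each disk that had one and inserts one at each disk that did not, while replacing $Z_{T^\sigma}$ by $Z_{\overline{T^\sigma}}$, the action of the shading-reversed tangle. Because $\Phi\colon\cP_\bullet\to\overline{\cP}_\bullet$ is a planar algebra \emph{isomorphism}, it intertwines these two shaded actions after conjugation by the $\Phi_\pm$ at every disk; comparing the two candidate maps, the entire discrepancy collapses to composites $\Phi_\mp\circ\Phi_\pm$, one per disk. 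The hypothesis $\Phi_\mp\circ\Phi_\pm=\id_\pm$ makes each of these the identity, so the two shadings agree. This is exactly the step that fails for merely self-dual (non-symmetric) planar algebras, and I expect the genuine difficulty to be purely bookkeeping: arranging the local cancellation so that it is visibly compatible with the gluing of disks.

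Granting well-definedness, the planar algebra axioms for $\unP_\bullet$ follow formally. Functoriality under composition of unshaded tangles reduces — after choosing checkerboard shadings of the two pieces that agree along the glued boundary, correcting any mismatch there by the identity $\Phi_\mp\circ\Phi_\pm$ — to the gluing axiom for $\shP_\bullet$ and the naturality of $\Phi$ used above. Finally, re-shading $\unP_\bullet$ as in Definition~\ref{defn:ShadeP} amounts to reinstating a checkerboard shading and reading off the two graded pieces; by construction these carry the original shaded actions of $\cP_+$ and $\cP_-$, so $\shP_\bullet$ is recovered and the theorem, hence the Corollary, follows.
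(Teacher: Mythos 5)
Your proposal is correct, and its mathematical core coincides with the paper's: the same construction ($\unP_{2n}=\shP_{n,+}$, with $\Phi_+$ inserted at each disk whose marked interval lies in a shaded region), and the same two ingredients in the verification (the intertwining property $\Phi\circ\shZ(\shS)=\shZ(\shS^\op)\circ(\bigotimes\Phi)$ and the symmetry $\Phi_\mp\circ\Phi_\pm=\id$). Where you differ is the decomposition. The paper never states shading-independence as a lemma: it fixes once and for all the canonical checkerboard shading $\sh(\unU)=\shU$ in which the output disk's marked interval is unshaded (so no output correction is ever needed), and then verifies $\unZ(\unU\circ_i\unV)=\unZ(\unU)\circ_i\unZ(\unV)$ directly, in two cases according to $\pm_i(\shU)$. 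Your shading-independence lemma is in effect folded into the paper's Case 2, where the canonical shading of the composite restricts on $\unV$ to the reversed shading $\shV^\op$, and the identity $\Phi_{\pm_j(\shV)}\circ\psi_j(\shV)=\psi_j(\shV^\op)$ performs exactly your ``one $\Phi_\mp\circ\Phi_\pm$ per disk'' cancellation. Your route makes it conceptually transparent where symmetric (rather than mere) self-duality enters, at the cost of carrying a choice of shading and an output-disk correction through the whole argument; the paper's route eliminates all choices up front and confines the symmetry hypothesis to one computation. Two bookkeeping points you glossed over and the paper handles explicitly: the odd-strand box spaces must be declared $(0)$ (only tangles with evenly many boundary points on every disk admit checkerboard shadings), and the indices differ by a factor of two ($\unP_{2n}=\shP_{n,+}$, not $\unP_n=\shP_{n,+}$, since shaded $n$-boxes have $2n$ strands). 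Neither affects the validity of your argument, and your derivation of the Corollary from the theorem (symmetric self-duality of 4442 and 3333 from the earlier paper, unitary fusion category of projections with the grading given by strand parity) is exactly the paper's.
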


\subsection{Unshaded and shaded planar algebras}\label{sec:Shading}

We refer the reader to \cite{MR2679382,MR2972458} for the definition of a shaded subfactor planar algebra, and to \cite{MR3405915,MR3624399} for the definition of an unshaded factor planar algebra.
As we need to shade unshaded planar algebras and lift the shading on shaded planar algebras, we introduce the following notation to help keep things as simple as possible.

\begin{nota}
We denote an unshaded planar algebra by the unshaded symbol $\unP_\bullet$, and we denote a shaded planar algebra by the shaded symbol $\shP_\bullet$.
We write $\unU$ for an unshaded planar tangle and $\unZ$ for the unshaded partition function.
Similarly, we write $\shS$ for a shaded planar tangle and $\shZ$ for the shaded partition function.
We use the symbol $\cT$ to denote a tangle which may be shaded or unshaded.

Given a shaded tangle $\shS$, let $\shS^\op$ be the shaded planar tangle obtained from $\shS$ by reversing the shading.
Note that forgetting the shading of $\shS$ gives an unshaded tangle $\unS$.
In Definition \ref{defn:ShadeT} below, given an unshaded tangle $\unU$, we define a special shaded tangle $\shU$.
\end{nota}

\begin{remark}
We caution the reader that for an unshaded planar algebra $\unP_\bullet$, the space $\unP_n$ corresponds to boxes with $n$ strands, whereas for a shaded planar algebra $\shP_\bullet$, the space $\shP_n$ corresponds to boxes with $2n$ strands.
\end{remark}

\begin{defn}
Given a shaded planar algebra $\shP_\bullet$, its \emph{dual planar algebra} $\overline{\shP}_\bullet$ is the planar algebra obtained by reversing the shading. 
The planar algebra $\shP_\bullet$ is \emph{self-dual} if there is a shaded planar algebra isomorphism $\Phi: \shP_\bullet\to \overline{\shP}_\bullet$. 
Note that $\Phi$ can be written as
$$
(\Phi_+,\Phi_-): (\shP_{+},\shP_{-}) \longrightarrow (\shP_{-},\shP_{+}).
$$
This means for a shaded planar tangle $\shS$ with $s$ input disks, $\Phi\circ \shS=\shS^\op \circ (\bigotimes^s \Phi)$.

If moreover $\Phi^2 = \id_{\shP_\bullet}$, which is equivalent to  $\Phi_\mp\circ \Phi_\pm = \id_{\shP_{\pm}}$, then $\shP_\bullet$ is called \emph{symmetrically self-dual} \cite{MR3314808}.
\end{defn}

One source of symmetrically self-dual shaded planar algebras is unshaded planar algebras.

\begin{defn}\label{defn:ShadeP}
Let $\unP_\bullet$ be an unshaded planar algebra such that $\unP_{2n+1}=(0)$ for all $n\geq 0$.
Define $\shP_\bullet$ as follows.
For all $n\geq 0$, let $\shP_{n,\pm} = \unP_{2n}$.
Suppose that for a shaded planar tangle $\shS$, the unshaded tangle $\unS$ is obtained from forgetting the shading of $\shS$.
Notice that since $\shS$ is shaded, $\unS$ has an even number of boundary points on all disks.
We define $\shZ(\shS)=\unZ(\unS)$, i.e., the action of $\shS$ is simply obtained by forgetting the shading.

Finally, note that switching the shading on $\shP_\bullet$ is obviously a symmetric self-duality, since the action $\shZ$ is independent of the shading, and merely depends on the unshaded action $\unZ$.
\end{defn}

Theorem \ref{thm:LiftShading} says that all symmetrically self-dual shaded planar algebras come from unshaded planar algebras. 
This theorem is both unsurprising and relatively tedious to prove.
At this point a number of fusion categories have been discovered first in the form of shaded planar algebras (see \cite{MR3314808, MR3306607}).
The motivation of this article is to complete the construction of these  categories.


To prove Theorem \ref{thm:LiftShading}, we introduce the following notation.

\begin{nota}
Suppose $\cT$ has $t$ input disks.
For $1\leq i \leq t$, we let $D_i(\cT)$ denote the $i$-th input disk of $\cT$, and we let $D_0(\cT)$ denote the output disk of $\cT$.
We write $k_i(\cT)$ to denote the number of boundary points of $D_i(\cT)$.
Finally, if $\cT$ is shaded, we write $\pm_i(\cT)$ to denote whether the distinguished interval of the disk $D_i(\cT)$ is in an unshaded or shaded region respectively.
\end{nota}

\begin{defn}\label{defn:ShadeT}
For an unshaded planar tangle $\unU$ whose input and output disks all have an even number of boundary points, we define a shaded tangle $\sh(\unU)=\shU$ by checkerboard shading $\unU$ using the rule that the region meeting the distinguished interval of the output disk is unshaded.

If  $\unU,\unV$ are unshaded planar tangles such that $\unV$ has $k_0(\unV) = k_i(\unU)$ boundary points, then the shading map $\sh$ behaves as follows:
$$
\sh(\unU\circ_i \unV)=
\begin{cases}
\shU\circ_i \shV &\text{if $\pm_i(\shU)=+$}\\
\shU\circ_i \shV^\op&\text{if $\pm_i(\shU)=-$}
\end{cases}
$$
where $\mp_i(\shU)=- \pm_i(\shU)$.  
\end{defn}

We now define an unshaded planar algebra from a symmetrically self-dual planar algebra.
The proof of Theorem \ref{thm:LiftShading} will then consist of showing the action $\unZ$ is well-defined.

\begin{defn}
Given a symmetrically self-dual shaded planar algebra $(\shP_\bullet,\Phi)$, we define a $\bbZ/2$-graded unshaded planar algebra $\unP_\bullet$ as follows. 
Let
$$
\unP_{n} =
\begin{cases}
\shP_{n/2,+} &\text{if $n$ is even}\\
(0) &\text{if $n$ is odd.} 
\end{cases}
$$ 
We now define the action $\unZ$.
First, given an unshaded tangle $\unU$ so that $k_i(\unU)\in 2\bbZ$ for all $0\leq i \leq t$, define
$$
\psi_{i}(\shU) =\begin{cases}
\id_{k_i(\resizebox{.35cm}{.18cm}{\shU})} &\text{if }\pm_i(\shU) = +\\
\Phi_+ &\text{if }\pm_i(\shU) = -.
\end{cases}
$$ 
We define the action of $\unU$ on $\unP_\bullet$ by
$$
\unZ(\unU)=\shZ\left(\shU\right)\circ \left(\bigotimes_{i=1}^t \psi_{i}(\shU)\right): \bigotimes_{i=1}^t \shP_{k_i,+} \to \shP_{k_0,+}.
$$
Note that this defines a map $\bigotimes_{i=1}^t \unP_{2k_i} \to \unP_{2k_0}$.
\end{defn}

We must now show that $\unP_\bullet$ is well-defined, i.e., that the action of a composite of tangles $\unZ(\unU\circ_i \unV)$ is equal to the composite of the actions of the tangles $\unZ(\unU)\circ_i \unZ(\unV)$.
We first give an illustrative example.

\begin{example}
Let $\unF$ be the unshaded 1-click rotation tangle, and let $\shF : \shP_{k,-} \to \shP_{k,+}$ be the shaded 1-click rotation.
Note that $\unZ(\unF)=\shZ(\shF) \circ \Phi_+$. 
Then
\begin{align*}
\unZ(\unF)\circ \unZ(\unF) 
&=\left(\shZ(\shF)\circ\Phi_+\right)\circ \left(\shZ(\shF)\circ\Phi_+\right)\displaybreak[1]\\
&=\left(\shZ(\shF)\circ\Phi_+\right)\circ \left(\Phi_- \circ \shZ(\shF^\op)\right)\displaybreak[1]\\
&=\shZ(\shF) \circ (\Phi_+\circ \Phi_-)\circ \shZ(\shF^\op)\displaybreak[1]\\
&=\shZ(\shF) \circ \shZ(\shF^\op)\displaybreak[1]\\
&=\shZ(\shF\circ \shF^\op)\displaybreak[1]\\
&=\shZ(\sh(\unF\circ\unF))\displaybreak[1]\\
&=\unZ(\unF\circ\unF).
\end{align*}
\end{example}

\begin{proof}[Proof of Theorem \ref{thm:LiftShading}]
Suppose $\unU,\unV$ are unshaded planar tangles with $u,v$ input disks respectively, with $u\geq 1$, such that $\unV$ has $k_0(\unV) = k_i(\unU)$ boundary points.
We show that $\unZ(\unU\circ_i \unV) = \unZ(\unU)\circ_i \unZ(\unV)$.
We treat the two cases for $\pm_i(\shU)$ separately.

\item[]\underline{Case 1:} 
Suppose that $\pm_i(\shU)=+$, so $\psi_i(\shU)=\id_{k_i(\shU)}$. 
Then we have
\begin{align*}
\unZ(\unU)\circ_i \unZ(\unV)
&=
\left(\shZ(\shU)\circ \left(\bigotimes^u_{i=1} \psi_{\ell}(\shU)\right)\right) \circ_i \left(\shZ(\shV) \circ \left(\bigotimes^{v}_{j=1} \psi_{j}(\shV)\right)\right)
\displaybreak[1]\\&=
\shZ(\shU\circ_i \shV)\circ \left(\bigotimes^{u+v-1}_{j=1} \psi_{j}(\shU\circ_i\shV)\right) 
\displaybreak[1]\\&= 
\unZ(\unU\circ_i \unV).
\end{align*}

\item[]\underline{Case 2:}
Suppose that $\pm_i(\shU)=-$, so $\psi_i(\shU)=\Phi_+$.
First, note that for all $1\leq j \leq v$,
\begin{align*}
 \Phi_{\pm_j(\shV)}\circ \psi_{j}(\shV)\
&=
\begin{cases}
\Phi_-\circ \Phi_+ = \id_{k_j,+} &\text{if }\pm_j(\shV)=+\\
\Phi_+\circ \id_{k_j,+} = \Phi_+ &\text{if }\pm_j(\shV)=-\\
\end{cases}
\Bigg\}
=
\psi_{j}(\shV^\op).
\end{align*}
Then we have that
\begin{align*}
 \unZ(\unU) &\circ_i \unZ(\unV)
\displaybreak[1]\\&=
\left(\shZ(\shU)\circ\left( \bigotimes_{\ell=1}^u \psi_{\ell}(\shU)\right)\right)
\circ_i 
\left(\shZ(\shV)\circ\left( \bigotimes_{j=1}^v \psi_{j}(\shV)\right)\right)
\displaybreak[1]\\&=
\left(\shZ(\shU)\circ\left( \bigotimes_{\ell\neq i} \psi_{\ell}(\shU)\right)\right)
\circ_i 
\left(\Phi_+\circ
\shZ(\shV)\circ\left( \bigotimes_{j=1}^v \psi_{j}(\shV)\right)\right)
\displaybreak[1]\\&=
\left(\shZ(\shU)\circ\left( \bigotimes_{\ell\neq i} \psi_{\ell}(\shU)\right)\right)
\circ_i 
\left(\Phi_+\circ \shZ(\shV)\circ
\left( \bigotimes_{j=1}^v \Phi_{\mp_j(\shV)}\right)
\circ
\left( \bigotimes_{j=1}^v \Phi_{\pm_j(\shV)}\circ \psi_{j}(\shV)\right)
\right)
\displaybreak[1]\\&=
\left(\shZ(\shU)\circ\left( \bigotimes_{\ell\neq i} \psi_{\ell}(\shU)\right)\right)
\circ_i 
\left(
\shZ(\shV^\op)
\circ
\left( \bigotimes_{j=1}^v \psi_{j}(\shV^\op)\right)
\right)
\displaybreak[1]\\&=
\shZ(\shU\circ_i \shV^\op)\circ \left(\bigotimes^{u+v-1}_{j=1} \psi_{j}(\shU\circ_i\shV^\op)\right) 
\displaybreak[1]\\&=
\unZ(\unU\circ_i \unV).
\qedhere
\end{align*}
\end{proof}

\begin{remark}
Definition \ref{defn:ShadeP} and Theorem \ref{thm:LiftShading} show that there is a bijective correspondence between symmetric self-dualities on subfactor planar algebras and unshaded $\bbZ/2\bbZ$-graded factor planar algebras.
This gives one way to construct examples of $\bbZ/2\bbZ$-graded extensions of the principal even half $\cC_0$ of a finite depth subfactor planar algebra by its odd half $\cC_1$.

The complete answer to this question was given by \cite{MR3354332}, using the techniques of \cite{MR2677836}.
In particular, \cite{MR3354332} gives a cohomological condition under which any invertible self-dual $\cC_0$-bimodule category $\cC_1$ gives exactly 2 such extensions of the form $\cC_0\oplus \cC_1$.

It is possible there might be extensions of the form $\cC_0\oplus \cC_1$ which are not generated by symmetrically self-dual objects. 
However, we have no such examples at this time.
\end{remark}

\subsection{Examples}\label{sec:Examples}

\begin{example}
In \cite{MR3314808}, we showed that several spoke subfactors are symmetrically self-dual, in particular, the 4442, 3333 ($3^{\bbZ/2\bbZ\times \bbZ/2\bbZ}$), and 2221 subfactors with principal graphs
$$
\bigraph{bwd1v1v1v1v1p1p1v1x0x0p0x1x0p0x0x1v1x0x0p0x1x0v1x0p0x1duals1v1v1v2x1x3v2x1}\,,\,
\bigraph{bwd1v1v1v1p1p1v1x0x0p0x1x0p0x0x1v1x0x0p0x1x0p0x0x1duals1v1v1x2x3v1x2x3}\,,\text{ and }
\bigraph{bwd1v1v1p1p1v1x0x0p0x1x0duals1v1v2x1}\,.
$$
\end{example}

\begin{example}
\label{ex:22221}
Many braided subfactor planar algebras coming from BMW algebras \cite{MR936086,MR3592517} are really unshaded.
In \cite{MR3306607}, we introduced a notion of $\sigma$-braided subfactor planar algebras, and we showed our examples with principal graphs
$$
\bigraph{bwd1v1p1v1x0p1x1p0x1v0x1x0duals1v1x2v1}
\text{ and }
\bigraph{bwd1v1p1v1x0p1x1p0x1v0x1x0duals1v2x1v1}
$$
are symmetrically self-dual.
The former one is a special case of BMW planar algebras and $\sigma=\pm1$. The latter one is new and $\sigma=\pm i$.
Their $\mathbb{Z}_2$ fixed point planar algebras have principle graphs $2^{\mathbb{Z}_2 \times \mathbb{Z}_2}1$ and $2^{\mathbb{Z}_4}1$ respectively (22221).
\end{example}

\begin{example}
The $\sigma=\pm i$ subfactor in Example \ref{ex:22221} turns out to be a special case of a new parameterized family of unshaded subfactor planar algebras \cite{1507.06030} related to quantum subgroups.
\end{example}

\begin{example}\label{ex:TY}
For a finite group $G$, the group subfactor $R\subset R\rtimes G$ is self-dual if and only if $G$ is abelian.
For $G$ abelian, the group subfactor $R\subset R\rtimes G$ has many symmetric self-dualities, which we parametrize in \S \ref{sec:bicharacters} below via symmetric bicharacters.
The resulting unitary fusion categories obtained via Theorem \ref{thm:LiftShading} are Tambara-Yamagami categories \cite{MR1659954}.
 
Recall that given an abelian group $A$, a non-degenerate symmetric bicharacter $\chi: A\times A\to S^1$, and a sign $\pm$, the Tambara-Yamagami category $\cT\cY(A,\chi,\pm)$ is a skeletal category with objects $a\in A$ and an object $m$ satisfying the following fusion rules:
\begin{enumerate}[label=(\arabic*)]
\item
$a\otimes b = ab$ for all $a,b\in A$
\item
$a\otimes m=m\otimes a = m$ for all $a\in A$, and
\item 
$m\otimes m = \bigoplus_{a\in A} a$.
\end{enumerate}
It was shown in \cite{MR1659954} that the categories $\cT\cY(A,\chi,\pm)$ and $\cT\cY(A',\chi',\pm')$ are equivalent if and only if there is an isomorphism $A\to A'$ which sends $\chi\to \chi'$, and the signs are equal: $\pm=\pm'$.
(See also \cite[Example 9.4]{MR2677836}.)

For example, Tambara-Yamagami showed in \cite[Theorem 4.1]{MR1659954} that there are only two inequivalent symmetric bicharacters on $\bbZ/2\bbZ \oplus \bbZ/2\bbZ$, which gives 4 inequivalent categories.
Similarly, it is straightforward to show that there are only 2 inequivalent symmetric bicharacters on $\bbZ/4\bbZ$, which gives 4 inequivalent categories.

By \cite{MR3021796},
all Tambara-Yamagami categories are unitary,
since they are weakly group theoretical.
However, to get a factor planar algebra, we need that $m$ is symmetrically self-dual, i.e., $m$ has Frobenius-Schur indicator 1 \cite{MR2381536}.
The Frobenius-Schur indicators for Tambara-Yamagami categories were completely worked out in \cite{MR2774703}, where it was shown that $\nu_2(a)=\delta_{a^2,e}$ and $\nu_2(m)=\pm$, the sign in $\cT\cY(A,\chi,\pm)$.
Hence we must have $\pm=+$ to get a factor planar algebra.
\end{example}

\begin{example}
The $m$-interval Jones-Wassermann subfactors for modular tensor categories are symmetrically self-dual for all $m\geq 1$ \cite{1612.08573}. 
This result was inspired by the open problem to construct a conformal net whose representation category is a prescribed unitary modular tensor category.
We refer the reader to  \cite{1612.08573} for further references related to Jones-Wassermann subfactors of conformal nets.

In particular, if we start with a unitary modular tensor category whose fusion ring is a finite abelian group $A$, then its $m$-interval Jones-Wassermann subfactor, for any $m\geq 2$, gives a symmetrically self-dual group subfactor (with group $A$), as well as a Tambara-Yamagami category as in Example \ref{ex:TY}. 
Moreover, the symmetric bicharacter $\chi$ of $A$ is determined by the modular $S$ matrix of the modular tensor category. 
\end{example}

\subsection{Group subfactors: symmetrical self-dualities and symmetric bicharacters}
\label{sec:bicharacters}

Given an abelian group $A$, we obtain a group subfactor planar algebra $\shP^A_\bullet$.
The minimal projections of $\shP^A_{2,\pm}$ are indexed by the groups elements in $A$.
These projections correspond to invertible objects in the category of projections \cite{MR2559686,MR3405915}, where the fusion corresponds to the coproduct, denoted $*$.
We denote the minimal projections of $\shP^A_{2,+}$ by $P_g$ for $g\in A$ so that 
$d P_g*P_h=P_{gh}$, where $d=\sqrt{|A|}$.

The group subfactor planar algebra $\shP^A_\bullet$ is an exchange relation planar algebra \cite{MR1950890}. 
By \cite[Theorem 2.26]{MR3551573},  $\Phi: \shP^A_\bullet\to \overline{\shP^A}_\bullet$ is a shaded planar algebra $*$-isomorphism of exchange relation planar algebras if and only if $\Phi$ preserves the structure of 2-boxes $\shP^A_{2,\pm}$, which consists of the adjoint operator $(\cdot)^*$, the trace, the contragredient (rotation by $\pi$), the multiplication (stacking), and the coproduct $*$.   

\begin{thm}\label{Thm:self-dual}
Let $A$ be a finite abelian group, and let $\FS:\shP^A_{2,+}\to \shP^A_{2,-}$ be the string Fourier transform (one-click rotation).  
If $\chi(g,h)$ is a non-degenerate bicharacter on $A$, then 
$$
\Phi_-(\FS(P_g)):=\sum_{h\in A} \frac{1}{d}\chi(g,h)P_h
\qquad\qquad\qquad
\Phi_+:=\FS^{-1}\Phi_-\FS
$$
extends uniquely to a shaded planar algebra $*$-isomorphism $\Phi: \shP^A_\bullet\to \overline{\shP^A}_\bullet$. 

Conversely, if $\Phi: \shP^A_\bullet\to \overline{\shP^A}_\bullet$ is a shaded planar algebra $*$-isomorphism, then the coefficients $\chi(g,h)$ defined above give a bicharacter on $A$.
\end{thm}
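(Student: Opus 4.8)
The plan is to reduce everything to a $2$-box computation via \cite[Theorem 2.26]{MR3551573}, which says that a shading-reversing linear map is a shaded planar algebra $*$-isomorphism $\shP^A_\bullet\to\overline{\shP^A}_\bullet$ exactly when its restriction to the $2$-boxes $\shP^A_{2,\pm}$ preserves the adjoint $(\cdot)^*$, the trace, the contragredient, the stacking multiplication, and the coproduct $*$. So I would first set up the two relevant bases: the minimal projections $P_g$ of $\shP^A_{2,+}$, which are orthogonal idempotents under stacking and satisfy $dP_g*P_h=P_{gh}$, and their Fourier transforms $Q_g:=\FS(P_g)$, which form a basis of $\shP^A_{2,-}$. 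The conceptual engine is that $\FS$ interchanges stacking and coproduct up to explicit powers of $d=\sqrt{|A|}$; hence, dually to the $P_g$, the $Q_g$ are group-like for stacking ($Q_g\cdot Q_h\propto Q_{gh}$) and orthogonal-idempotent-like for the coproduct ($Q_g*Q_h\propto\delta_{g,h}Q_g$). Establishing these dual relations with the correct constants is the first technical step.

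For the forward direction I would substitute $\Phi_-(Q_g)=\tfrac1d\sum_h\chi(g,h)P_h$ into each of the five conditions and read off what it forces on $\chi$. Preservation of multiplication, $\Phi_-(Q_g\cdot Q_h)=\Phi_-(Q_g)\cdot\Phi_-(Q_h)$, collapses via $P_kP_l=\delta_{kl}P_k$ and $Q_g\cdot Q_h\propto Q_{gh}$ to $\chi(gh,k)=\chi(g,k)\chi(h,k)$, i.e.\ multiplicativity of $\chi$ in the first variable; preservation of the coproduct, using $P_k*P_l\propto P_{kl}$ and orthogonality of characters, yields multiplicativity in the second variable together with the statement that the $\chi(g,\cdot)$ are genuine characters; preservation of the adjoint gives $\overline{\chi(g,h)}=\chi(g,h)^{-1}$, i.e.\ $S^1$-valuedness, which is automatic for a bicharacter; and the trace and contragredient conditions then hold automatically, the latter because the contragredient is $\FS^{2}$ and $\Phi_+:=\FS^{-1}\Phi_-\FS$ is defined precisely so that $\Phi$ intertwines the (planar) one-click rotation. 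Non-degeneracy of $\chi$ makes the matrix $(\chi(g,h))$ invertible, so $\Phi_-$, and hence $\Phi$, is a bijection rather than a mere homomorphism, and \cite[Theorem 2.26]{MR3551573} then produces the unique extension to all of $\shP^A_\bullet$.

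For the converse I would run the same computations in reverse. Given a $*$-isomorphism $\Phi$, the coefficients $\chi(g,h)$ are well-defined by expanding $\Phi_-(\FS(P_g))$ in the basis $\{P_h\}$; preservation of stacking and of the coproduct then force the two multiplicativity identities, preservation of the adjoint forces $S^1$-valuedness, and bijectivity of $\Phi_-$ forces $(\chi(g,h))$ to be invertible, i.e.\ $\chi$ to be non-degenerate. Thus the $\chi(g,h)$ assemble into a non-degenerate bicharacter.

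The main obstacle I anticipate is bookkeeping rather than conceptual: pinning down the exact product and coproduct relations satisfied by $Q_g=\FS(P_g)$, including every power of $d=\sqrt{|A|}$ produced by the Fourier transform and by the normalization $dP_g*P_h=P_{gh}$, and then checking that with those constants each of the five $2$-box conditions reduces cleanly to a single bicharacter axiom with no leftover or contradictory constraint. A secondary care point is the interaction of the adjoint, trace, and contragredient with $\FS$: since the contragredient is $\FS^{2}$, one must verify that the definition $\Phi_+=\FS^{-1}\Phi_-\FS$ is genuinely consistent with all of the $2$-box structure at once, not merely with the Fourier transform.
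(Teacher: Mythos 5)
Your reduction is exactly the paper's: apply \cite[Theorem 2.26]{MR3551573} to reduce everything to checking that $\Phi_-$ preserves the five pieces of $2$-box structure, verify these from the bicharacter axioms in the forward direction, and run the computations backwards for the converse (multiplication gives multiplicativity in the first variable, coproduct in the second). However, your plan dismisses two of the five checks on faulty grounds, and this is a genuine gap. The claim that the contragredient condition holds automatically ``because the contragredient is $\FS^2$ and $\Phi_+:=\FS^{-1}\Phi_-\FS$ is defined precisely so that $\Phi$ intertwines the one-click rotation'' is circular. Write $\FS_+\colon\shP^A_{2,+}\to\shP^A_{2,-}$ and $\FS_-\colon\shP^A_{2,-}\to\shP^A_{2,+}$ for the two one-click rotations in the same rotational direction, so that $\FS_-\FS_+$ and $\FS_+\FS_-$ are the contragredients (not the identity). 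The definition $\Phi_+=\FS_-^{-1}\Phi_-\FS_+$ encodes only one of the two intertwining relations a shaded planar algebra map must satisfy, namely $\FS_-\Phi_+=\Phi_-\FS_+$; the other, $\Phi_+\FS_-=\FS_+\Phi_-$, is (given the first) precisely equivalent to $\Phi_-$ commuting with the two-click rotation, i.e., to the contragredient condition you wanted for free. For an arbitrary linear bijection $\Phi_-$ this fails, so it must be verified by hand; it does hold here because $\chi(g,h^{-1})=\overline{\chi(g,h)}=\chi(g^{-1},h)$ for a bicharacter, which is exactly the paper's Step 3: $\overline{\Phi_-(\FS(P_g))}=\tfrac1d\sum_h\chi(g,h)P_{h^{-1}}=\tfrac1d\sum_h\chi(g^{-1},h)P_h=\Phi_-\bigl(\overline{\FS(P_g)}\bigr)$.

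The trace condition is likewise not free, and it is the place where non-degeneracy of $\chi$ enters the forward direction: $\Tr(\Phi_-(\FS(P_g)))=\tfrac1d\sum_{h}\chi(g,h)$, and this equals $\Tr(\FS(P_g))=d\,\delta_{g,e}$ exactly when $\chi(g,\cdot)$ is the trivial character only for $g=e$ (character orthogonality). So non-degeneracy does more than make the matrix $(\chi(g,h))$ invertible, contrary to your accounting, which assigns it only the role of bijectivity. (One could instead derive trace preservation from the fact that a unital $*$-algebra isomorphism $\bbC^{|A|}\to\bbC^{|A|}$ permutes minimal projections, all of which have Markov trace $1$ here---but that argument also uses bijectivity, hence non-degeneracy, and needs to be said.) Once these two verifications are restored, your outline reproduces the paper's proof, including the converse; your extra observation there that bijectivity of $\Phi_-$ forces non-degeneracy of $\chi$ is correct, though the theorem does not claim it.
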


\begin{proof}
We define $\Phi_\pm$ as in the statement and extend the map linearly to labelled tangles in the universal planar algebra generated by 2-boxes. 
By  \cite[Theorem 2.26]{MR3551573}, it is enough to prove that $\Phi_-$ preserves the adjoint operator, the trace, the contragredient, the multiplication, and the coproduct on 2-boxes.

\item[]\underline{Step 1:} 
the extension $\Phi_-$ preserves the adjoint operator:
$$
(\Phi_-(\FS(P_g)))^*
=\sum_{h\in A} \frac{1}{d} \overline{\chi(g,h)}P_h
=\sum_{h\in A} \frac{1}{d} \chi(g^{-1},h)P_h
=\FS(P_{g^{-1}})
=\FS^{-1}(P_{g})
=(\FS(P_{g}))^*.
$$

\item[]\underline{Step 2:} 
the extension $\Phi_-$ preserves the trace:
$$
\Tr(\Phi_-(\FS(P_g)))
=\sum_{h\in A} \frac{1}{d}\chi(g,h)
=d \delta_{g=e}
=\Tr(\FS(P_g)),
$$
where $\delta_{g=e}$ is the Kronecker function, and $e$ is the identity of the group $A$. 

\item[]\underline{Step 3:} 
the extension $\Phi_-$ preserves the contragredient:
\begin{align*}
\overline{\Phi_-(\FS(P_g))}
&=\sum_{h\in A} \frac{1}{d}\chi(g,h)\overline{P_h}\\
&=\sum_{h\in A} \frac{1}{d}\chi(g,h)P_{h^{-1}}\displaybreak[1]\\
&=\sum_{h\in A} \frac{1}{d}\chi(g,h^{-1})P_h\\
&=\sum_{h\in A} \frac{1}{d}\overline{\chi(g,h)}P_{h}\displaybreak[1]\\
&=\sum_{h\in A} \frac{1}{d}\chi(g^{-1},h)P_{h}\\
&=\FS(P_{g^{-1}})\displaybreak[1]\\
&=\FS(\overline{P_{g}}).
\end{align*}

\item[]\underline{Step 4:} 
the extension $\Phi_-$ preserves the multiplication:
\begin{align*}
\Phi_-(\FS(P_{g_1}))\Phi_-(\FS(P_{g_2}))
&=\sum_{h\in A} \frac{1}{d^2}\chi(g_1,h)\chi(g_2,h)P_h\\
&=\sum_{h\in A} \frac{1}{d^2}\chi(g_1g_2,h)P_h\\
&=\frac{1}{d}\Phi_-(\FS(P_{g_1g_2}))\\
&=\Phi_-(\FS(P_{g_1}* P_{g_2}))\\
&=\Phi_-(\FS(P_{g_1}) \FS(P_{g_2})).
\end{align*}

\item[]\underline{Step 5:} 
the extension $\Phi_-$ preserves the coproduct:
\begin{align*}
\Phi_-(\FS(P_{g_1}))*\Phi_-(\FS(P_{g_2})
&=\sum_{h_1,h_2\in A} \frac{1}{d^2}\chi(g_1,h_1) \chi(g_2,h_2)P_{h_1}*P_{h_2}\\
&=\sum_{h_1,h\in A} \frac{1}{d^3}\chi(g_1,h_1) \chi(g_2,h_1^{-1}h)P_{h}\\
&=\sum_{h_1,h\in A} \frac{1}{d^3}\chi(g_1g_2^{-1},h_1) \chi(g_2,h)P_{h}\\
&=\sum_{h_1\in A} \frac{1}{d^2}\chi(g_1g_2^{-1},h_1) \FS (P_{g_2})\\
&=\delta_{g_1,g_2} \FS (P_{g_2})\\
&= \FS (P_{g_1}P_{g_2})\\
&= \FS (P_{g_1})*\FS(P_{g_2}).
\end{align*}
Therefore $\Phi$ extends uniquely to a shaded planar algebra $*$-isomorphism $\Phi: \shP^A_\bullet\to \overline{\shP^A}_\bullet$.
Conversely, if $\Phi: \shP^A_\bullet\to \overline{\shP^A}_\bullet$ is a shaded planar algebra $*$-isomorphism, then by definition,
$$
\chi(g,h)
=d \Tr(\Phi_-(\FS(P_g))P_h).
$$ 
In particular $\chi(e,h)=\Tr(P_h)=1$. By the computation in Step 3,
$$
\Phi_-(\FS(P_{g_1}))\Phi_-(\FS(P_{g_2}))=\Phi_-(\FS(P_{g_1})\FS(P_{g_2}))
$$ 
implies that 
$\chi(g_1g_2,h)=\chi(g_1,h)\chi(g_2,h).$
Thus $\chi(g,h)$ is a character with respect to $g$.
On the other hand, 
$$
\chi(g,h)=d \Tr(\Phi_-(\FS(P_g))P_h)
%
=
d \Tr(\FS(P_g)\Phi_-^{-1}(P_h))
$$
and in particular $\chi(g,e)=d \Tr(\FS(P_g) P_e)=1$.
Moreover, 
\begin{align*}
\chi(g,h_1h_2)
&=d \Tr(\FS(P_g)\Phi_-^{-1}(P_{h_1h_2}))\\
&=d \Tr(P_g\FS^{-1}(\Phi_-^{-1}(P_{h_1h_2})))\\
&=d^2 \Tr(P_g\FS^{-1}(\Phi_-^{-1}(P_{h_1}*P_{h_2})))\\
&=d^2 \Tr(P_g\FS^{-1}((\Phi_-^{-1}(P_{h_1})*\Phi^{-1}(P_{h_2})))\\
&=d^2 \Tr(P_g\FS^{-1}(\Phi_-^{-1}(P_{h_1})) \FS^{-1}(\Phi^{-1}(P_{h_2})))\\
&=d^2 \Tr(P_g\FS^{-1}(\Phi_-^{-1}(P_{h_1}))) \Tr(P_g \FS^{-1}(\Phi^{-1}(P_{h_2})))\\
&=\chi(g,h_1)\chi(g,h_2).
\end{align*}
Therefore $\chi(g,h)$ is a bicharacter.
\end{proof}

\begin{thm}\label{Thm:symmetrically self-dual}
The bicharacter $\chi(g,h)$ in Theorem \ref{Thm:self-dual} is symmetric if and only if $\Phi^2=1$, namely $\shP^A_{\bullet}$ is symmetrically self-dual.
\end{thm}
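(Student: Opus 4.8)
The plan is to reduce everything to a single character-orthogonality computation on the $2$-box space. Note first that $\Phi^2=\id$ is equivalent to $\Phi_-\circ\Phi_+=\id_{\shP^A_{2,+}}$ alone: since $\Phi$ is a planar algebra isomorphism the maps $\Phi_\pm$ are bijective on the finite-dimensional $2$-box spaces, so a one-sided inverse is two-sided and $\Phi_+\circ\Phi_-=\id_{\shP^A_{2,-}}$ comes for free. Moreover $\shP^A_\bullet$ is generated by its $2$-boxes (it is an exchange relation planar algebra), so it suffices to verify $\Phi_-\Phi_+=\id$ on the minimal projections $P_g$, $g\in A$. Thus the whole theorem amounts to computing $\Phi_-\Phi_+(P_g)$ explicitly.

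The first real step is to record the rotational structure of $\FS$ on $2$-boxes. Since $\FS$ is the one-click rotation, $\FS^2$ is the contragredient (rotation by $\pi$) and $\FS^4=\id$; combining $\FS^2=\overline{(\cdot)}$ with the identity $\overline{P_h}=P_{h^{-1}}$ established in Step~3 of Theorem~\ref{Thm:self-dual} gives $\FS^2(P_h)=P_{h^{-1}}$, and hence the key identity
$$\FS^{-1}(P_h)=\FS^3(P_h)=\FS\bigl(\FS^2(P_h)\bigr)=\FS(P_{h^{-1}}).$$
This identity is exactly what lets me apply $\Phi_-$: although $\Phi_-$ is specified directly only on vectors of the form $\FS(P_g)$, rewriting $\FS^{-1}(P_h)=\FS(P_{h^{-1}})$ puts it in that form, so that $\Phi_-(\FS^{-1}(P_h))=\sum_k\frac1d\chi(h^{-1},k)P_k$ by the defining formula.

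Now I would expand, using $\Phi_+=\FS^{-1}\Phi_-\FS$ and the definition of $\Phi_-\FS$:
$$\Phi_-\Phi_+(P_g)=\sum_{h}\tfrac1d\chi(g,h)\,\Phi_-\bigl(\FS^{-1}(P_h)\bigr)=\sum_{k}\Bigl(\tfrac{1}{d^2}\sum_{h}\chi(g,h)\,\chi(h^{-1},k)\Bigr)P_k.$$
Writing $\chi(h^{-1},k)=\overline{\chi(h,k)}$ and viewing $h\mapsto\chi(g,h)$ and $h\mapsto\chi(h,k)$ as the characters $\chi(g,\cdot),\chi(\cdot,k)\in\widehat{A}$, orthogonality of characters collapses the inner sum to $|A|=d^2$ when $\chi(g,\cdot)=\chi(\cdot,k)$ and to $0$ otherwise. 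Non-degeneracy of $\chi$ makes $k\mapsto\chi(\cdot,k)$ a bijection $A\to\widehat{A}$, so there is a unique $k(g)$ with $\chi(\cdot,k(g))=\chi(g,\cdot)$, and therefore $\Phi_-\Phi_+(P_g)=P_{k(g)}$.

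Finally I conclude: $\Phi^2=\id$ holds iff $k(g)=g$ for every $g$, iff $\chi(\cdot,g)=\chi(g,\cdot)$ as characters for every $g$, iff $\chi(h,g)=\chi(g,h)$ for all $g,h$, that is, iff $\chi$ is symmetric. The main obstacle I anticipate is not the algebra but the conventions: the entire argument hinges on justifying $\FS^{-1}(P_h)=\FS(P_{h^{-1}})$ (equivalently $\FS^2(P_h)=P_{h^{-1}}$ together with $\FS^4=\id$ on $2$-boxes), after which every remaining step is the substitution above followed by character orthogonality.
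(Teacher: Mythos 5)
Your proof is correct, and it is worth comparing with the paper's, since the two arguments coincide in one direction and genuinely diverge in the other. Your reduction of $\Phi^2=\id$ to the single identity $\Phi_-\Phi_+=\id$ on the projections $P_g$ (via bijectivity of $\Phi_\pm$ and generation by $2$-boxes) is equivalent to the paper's opening reduction, which notes $\Phi_+\Phi_-=\FS^{-1}\Phi_-\FS\Phi_-$ and hence that $\Phi^2=1$ iff $(\Phi_-\FS)^2=\FS^2$; likewise your key identity $\FS^{-1}(P_h)=\FS(P_{h^{-1}})$ is exactly what already appears in Step 1 of the proof of Theorem \ref{Thm:self-dual}, and is what makes the paper's step ``$=P_{g^{-1}}=\FS^2(P_{g})$'' legitimate. (A small nitpick: $\overline{P_h}=P_{h^{-1}}$ is \emph{used} in Step 3 of that proof, not established there; it is a standard fact about the group subfactor planar algebra.) For the implication ``$\chi$ symmetric $\Rightarrow\Phi^2=1$'' the paper performs essentially your computation: it expands $(\Phi_-\FS)^2(P_g)=\sum_{h,k}\frac{1}{d^2}\chi(g,h)\chi(h,k)P_k$, uses symmetry to combine the two characters into $\chi(gk,h)$, and applies orthogonality plus non-degeneracy to collapse the sum to $P_{g^{-1}}=\FS^2(P_g)$; your version needs non-degeneracy at the same spot (uniqueness of $k(g)$), and that hypothesis is indeed supplied by Theorem \ref{Thm:self-dual}. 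The genuine difference is the converse. The paper proves ``$\Phi^2=1\Rightarrow\chi$ symmetric'' by a basis-free trace argument: starting from $\chi(g,h)=d\Tr(\Phi_-(\FS(P_g))P_h)$, it uses rotation invariance of the trace, the relation $\FS^{-1}\Phi_-\FS=\Phi_+=\Phi_-^{-1}$, and the fact that $\Phi_-$ preserves trace and multiplication, to move the isomorphism onto the other factor and conclude $\chi(g,h)=\chi(h,g)$ directly. You instead read symmetry off the same orthogonality computation: the coefficient of $P_g$ in $\Phi_-\Phi_+(P_g)$ equals $1$ exactly when $\chi(g,\cdot)=\chi(\cdot,g)$, and notably this direction of your argument requires no non-degeneracy (only the coefficient of $P_g$ matters), which is just as well since in that direction $\chi$ is a priori only known to be a bicharacter. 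In summary, your route is more uniform---one explicit character-orthogonality computation yields both implications---while the paper's converse is structural, exploiting the planar-algebraic trace and rotation rather than the expansion in the basis $\{P_k\}$; both are complete proofs.
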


\begin{proof}
Note that $\Phi_+\Phi_-=\FS^{-1}\Phi_-\FS\Phi_-$, so $\Phi^2=1$ if and only if $(\Phi_-\FS)^2=\FS^2$.

If the bicharacter $\chi(g,h)$ is symmetric, then 
$$
(\Phi_-\FS)^2(P_g)
=\sum_{h,k \in A}\frac{1}{d^2} \chi(h,k) \chi(g,h) P_k
=\sum_{h,k \in A}\frac{1}{d^2} \chi(g+k,h) P_k
=P_{g^{-1}}
=\FS^2(P_{g}).
$$
Thus $(\Phi_-\FS)^2=\FS^2$ and $\Phi^2=1$.

On the other hand, if $\Phi^2=1$, then 
\begin{align*}
\chi(g,h)
&=d \Tr(\Phi_-(\FS(P_g))P_h) \\
&=d \Tr(\FS^{-1} (\Phi_-(\FS(P_g))) \FS(P_h)) \\
&=d \Tr(\Phi_-^{-1}(P_g) \FS(P_h)) \\
&=d \Tr(P_g \Phi_-(\FS(P_h))) \\
&=d \Tr(\Phi_-(\FS(P_h))P_g) \\
&=\chi(h,g),
\end{align*}
and the bicharacter is symmetric.
\end{proof}

For a finite abelian group $A$ and a non-degenerate symmetric bicharacter $\chi: A\times A\to S^1$, 
the $\mathbb{Z}/2$-graded unitary fusion category corresponding to the unshaded planar algebra $\shP^A_{\bullet}$ is the Tambara-Yamagami category. 
We refer the readers to \cite{1612.08573} for a more general case, where $A$ is replaced by a unitary modular tensor category and $\chi$ is replaced by the modular $S$ matrix.

\subsection*{Acknowledgements}

We would like to thank Noah Snyder for helpful conversations.
Zhengwei Liu was supported by a grant from Templeton Religion Trust and an AMS Simons travel grant.
Scott Morrison was supported by Discovery Projects `Subfactors and symmetries' DP140100732 and `Low dimensional categories' DP160103479, and a Future Fellowship `Quantum symmetries' FT170100019 from the Australian Research Council.
David Penneys was supported by an AMS Simons travel grant and NSF DMS grants 1500387/1655912 and 1654159.

\renewcommand*{\bibfont}{\small}
\setlength{\bibitemsep}{0pt}
\raggedright
\printbibliography

\end{document}